\newcommand{\R}{\mathbb{R}}
\newcommand{\re}{\mathrm{Re}}
\newcommand{\Hn}{\mathcal{H}}
\newcommand{\Cn}{\mathbb{C}^{n\times n}}
\newcommand{\C}{\mathbb{C}}
\newcommand{\dg}{\mathrm{diag}}
\newcommand{\co}{\mathrm{conv}}
\newcommand{\uni}{\mathcal{U}}
\newcommand{\tr}{\mathrm{tr}}
\title{The star-shapedness of a generalized numerical range}
\author[1]{Pan-Shun Lau\thanks{panlau@hku.hk}}
\author[1]{Tuen-Wai Ng\thanks{ntw@maths.hku.hk}}
\author[1]{Nam-Kiu Tsing\thanks{nktsing@hku.hk}}
\affil[1]{\small Department of Mathematics, The University of Hong Kong, Pokfulam, Hong Kong}
\begin{document}
\newtheorem{thm}{Theorem}[section]
\newtheorem{lem}[thm]{Lemma}
\newtheorem{coro}[thm]{Corollary}
\newtheorem{defn}[thm]{Definition}
\newtheorem{prop}[thm]{Proposition}
\newtheorem{exam}[thm]{Example}

\maketitle
\hrule
\section*{\small Abstract}

Let $\Hn_n$ be the set of all $n\times n$ Hermitian matrices and $\Hn^m_n$ be the set of all $m$-tuples of $n\times n$ Hermitian matrices. For $A=(A_1,...,A_m)\in\Hn^m_n$ and for any linear map $L:\Hn^m_n\to\R^\ell$, we define the $L$-numerical range of $A$ by
\[
W_L(A):=\{L(U^*A_1U,...,U^*A_mU): U\in\Cn, U^*U=I_n\}.
\]
In this paper, we prove that if $\ell\leq 3$, $n\geq \ell$ and $A_1,...,A_m$ are simultaneously unitarily diagonalizable, then $W_L(A)$ is star-shaped with star center at $L\left(\frac{\tr A_1}{n}I_n,...,\frac{\tr A_m}{n}I_n\right)$.\\[5pt]
{\footnotesize\emph{AMS Classification:}	15A04, 15A60.\\
\emph{Keywords:} generalized numerical range, star-shapedness, joint unitary orbit}
\vspace{10pt}
\hrule

\section{Introduction}
Let $\Cn$ denote the set of all $n\times n$ complex matrices, and $A\in\Cn$. The (classical) numerical range of $A$ is defined by
\[
W(A):=\{x^*Ax:x\in\C^n, x^*x=1\}.
\]
The properties of $W(A)$ were studied extensively in the last few decades and many nice results were obtained; see \cite{GR,HJ}. The most beautiful result is probably the Toeplitz-Hausdorff Theorem which affirmed the convexity of $W(A)$; see \cite{Hausdorff,Toeplitz}. The generalizations of $W(A)$ remain an active research area in the field. 

For any $A\in\Cn$, write $A=A_1+iA_2$ where $A_1,A_2$ are Hermitian matrices. Then by regarding $\C$ as $\R^2$, one can rewrite $W(A)$ as
\[
W(A):=\{(x^*A_1x,x^*A_2x):x\in\C^n, x^*x=1\}.
\]
This expression motivates naturally the generalization of the numerical range to the joint numerical range, which is defined as follows. Let $\Hn_n$ be the set of all $n\times n$ Hermitian matrices and $\Hn^m_n$ be the set of all $m$-tuples of $n\times n$ Hermitian matrices. The joint numerical range of $A=(A_1,...,A_m)\in \Hn^m_n$ is defined as
\[
W(A)=W(A_1,...,A_m):=\{(x^*A_1x,...,x^*A_mx):x\in\C^n, x^*x=1\}.
\]
It has been shown that for $m\leq 3$ and $n\geq m$, the joint numerical range is always convex \cite{AYP}. This result generalizes the Toeplitz-Hausdorff Theorem. However, the convexity of the joint numerical range fails to hold in general for $m>3$, see \cite{AYP,GJK,LP98}. 

When a new generalization of numerical range is introduced, people are always interested in its convexity. Unfortunately, this nice property fails to hold in some generalizations. However, another property, namely star-shapedness, holds in some generalizations; see \cite{CheungT,Tsing}. Therefore, the star-shapedness is the next consideration when the generalized numerical ranges fail to be convex. A set $M$ is called star-shaped with respect to a star-center $x_0\in M$ if for any $0\leq \alpha\leq 1$ and $x\in M$, we have $\alpha x + (1-\alpha) x_0 \in M$.  In \cite{LP2011}, Li and Poon showed that for a given $m$, the joint numerical range $W(A_1,...,A_m)$ is star-shaped if $n$ is sufficiently large. 

Let $\mathcal{U}_n$ be the set of all $n\times n$ unitary matrices. For $C\in\Hn_n$ and $A=(A_1,...,A_m)\in \Hn^m_n$, the joint $C$-numerical range of $A$ is defined by
\[
W_C(A):=\{(\tr(CU^*A_1U),...,\tr(CU^*A_mU): U\in\uni_n\},
\]
where $\tr(\cdot)$ is the trace function. When $C$ is the diagonal matrix with diagonal elements $1,0,...,0$, then $W_C(A)$ reduces to $W(A)$. Hence the joint $C$-numerical range is a generalization of the joint numerical range. In \cite{AYT2}, Au-Yeung and Tsing generalized the convexity result of the joint numerical range to the joint $C$-numerical range by showing that $W_C(A)$ is always convex if $m\leq 3$ and $n\geq m$. However $W_C(A)$ fails to be convex in general if $m>3$. One may consult \cite{CN} and \cite{CLP} for the study of the convexity of $W_C(A)$. The star-shapedness of $W_C(A)$ remains unclear for $m>3$.

For $A=(A_1,...,A_m)\in\Hn_n^m$, we define the joint unitary orbit of $A$ by 
\[
\uni_n(A):=\{(U^*A_1U,...,U^*A_mU):U \in \mathcal{U}_n\}.\]
For $C\in \Hn_n$, we consider the linear map $L_C:\Hn^m_n \to \R^m$ defined by
\[
L_C(X_1,...,X_m)=(\tr(CX_1),...,\tr(CX_m)).
\]
Then the joint $C$-numerical range of $A$ is the linear image of $\mathcal{U}_n(A)$ under $L_C$. Inspired by this alternative expression, we consider the following generalized numerical range of $A\in\Hn_n^m$. For $A=(A_1,...,A_m)\in \Hn^m_n$ and linear map $L:\Hn^m_n \to \R^\ell$, we define
\[
W_L(A)=L(\mathcal{U}_n(A)):=\{L(U^*A_1U,...,U^*A_mU):U \in \mathcal{U}_n\},
\] and call it the $L$-numerical range of $A$, due to \cite{CT}. Because $L_C$ is a special case of general linear maps $L$, the $L$-numerical range generalizes the joint $C$-numerical range and hence the classical numerical range. 

In this paper, We shall study in Section two an inclusion relation of the $L$-numerical range of $m$-tuples of simultaneously unitarily diagonalizable Hermitian matrices and linear maps $L:\Hn_n^m\to \R^\ell$ with $\ell=2,3$. This inclusion relation will be applied in Section three to show that the $L$-numerical ranges of $A$ under our consideration are star-shaped. 

\section{An Inclusion Relation for $L$-numerical Ranges}

The following results follow easily from the the definition of the $L$-numerical range.
\begin{lem}\label{basic}
Let $(A_1,...,A_m)\in\Hn_n^m$ and $L:\Hn_n^m\to\R^\ell$ be linear. Then the followings hold:
\begin{enumerate}[(i)]
	\item $W_L(\alpha(A_1,...,A_m)+\beta(I_n,...,I_n))=\alpha W_L(A_1,...,A_m)+\beta L(I_n,...,I_n)$ if $\alpha,\beta\in\R$;
	\item $W_L(U^*A_1U,...,U^*A_mU)=W_L(A_1,...,A_m)$ for all unitary $U$.
\end{enumerate}
\end{lem}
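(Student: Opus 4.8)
\section*{Proof proposal for Lemma~\ref{basic}}

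The plan is to verify both identities directly from the definition of $W_L$, using only the (real-)linearity of $L$ and elementary properties of the unitary group; no dimension restriction or structural hypothesis on the $A_j$ is needed, which is why the paper flags these as routine.

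For (i), I would begin with the one observation that makes the scalar part transparent: scalar matrices are fixed by unitary conjugation, i.e.\ $U^*(\beta I_n)U=\beta I_n$ for every $U\in\uni_n$. Hence, for each fixed $U\in\uni_n$, the tuple $\bigl(U^*(\alpha A_1+\beta I_n)U,\dots,U^*(\alpha A_m+\beta I_n)U\bigr)$ equals $\alpha(U^*A_1U,\dots,U^*A_mU)+\beta(I_n,\dots,I_n)$, and applying $L$ and using linearity gives
\[
L\bigl(U^*(\alpha A_1+\beta I_n)U,\dots,U^*(\alpha A_m+\beta I_n)U\bigr)=\alpha\,L(U^*A_1U,\dots,U^*A_mU)+\beta\,L(I_n,\dots,I_n).
\]
Taking the union over all $U\in\uni_n$ turns the left-hand side into $W_L(\alpha(A_1,\dots,A_m)+\beta(I_n,\dots,I_n))$ and the right-hand side into $\alpha W_L(A_1,\dots,A_m)+\beta L(I_n,\dots,I_n)$, which is exactly the asserted identity.

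For (ii), the point to exploit is that left multiplication by a fixed unitary is a bijection of $\uni_n$ onto itself. Writing $B_j:=U^*A_jU$, a generic element of $W_L(B_1,\dots,B_m)$ has the form
\[
L\bigl(V^*B_1V,\dots,V^*B_mV\bigr)=L\bigl((UV)^*A_1(UV),\dots,(UV)^*A_m(UV)\bigr),\qquad V\in\uni_n,
\]
and as $V$ runs over $\uni_n$ so does $W:=UV$; therefore this set coincides with $\{L(W^*A_1W,\dots,W^*A_mW):W\in\uni_n\}=W_L(A_1,\dots,A_m)$.

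There is no genuine obstacle: both parts are immediate bookkeeping once one records $U^*I_nU=I_n$, the group structure of $\uni_n$, and linearity of $L$. The only point to state carefully is that $\alpha$ may be negative, so ``$\alpha W_L(A_1,\dots,A_m)$'' is to be read as the dilation of the set $W_L(A_1,\dots,A_m)$ by the scalar $\alpha$ (possibly orientation-reversing), not as a convex-combination coefficient; the argument above is insensitive to the sign of $\alpha$.
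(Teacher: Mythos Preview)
Your proof is correct and is precisely the direct verification from the definition that the paper intends; in fact the paper gives no proof at all, merely remarking that these properties ``follow easily from the definition of the $L$-numerical range.''
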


In the following we shall consider those $A_1,...,A_m$ which are simultaneously unitarily diagonalizable, i.e., there exists $U\in\uni_n$ such that $U^*A_1U,...,U^*A_mU$ are all diagonal. Hence by Lemma \ref{basic}, we assume without loss of generality that $A_1,...,A_m$ are (real) diagonal matrices. For $d=(d_1,...,d_n)^T\in\R^n$, we denote by $\dg(d)$ the $n\times n$ diagonal matrix with diagonal elements $d_1,...,d_n$. We first introduce a special class of matrices which is useful in studying the generalized numerical range; see \cite{GS,Poon,Tsing}.

An $n\times n$ real matrix $P=(p_{ij})$ is called a pinching matrix if for some $1\leq s< t\leq n$ and $0\leq \alpha\leq 1$,
\[
p_{ij}=\left\{\begin{array}{cc} \alpha, & \text{ if }(i,j)=(s,s) \text{ or } (t,t),\\
1-\alpha, &\text{ if }(i,j)=(s,t) \text{ or } (t,s),\\
1,& \text{ if }i=j\neq s,t ,\\
0& \text{otherwise}.
\end{array}\right.
\]

\begin{defn}
Assume $D=(\dg(d^{(1)}),...,\dg(d^{(m)}))$, {$\hat{D}=(\dg(\hat{d}^{(1)}),...,$ $\dg(\hat{d}^{(m)}))$} where $d^{(1)},...,d^{(m)},\hat{d}^{(1)},...\hat{d}^{(m)}\in\R^n$. We say $\hat{D}\prec D$ if there exist a finite number of pinching matrices $P_1,...,P_k$ such that $\hat{d}^{(i)}=P_1P_2\cdots P_k d^{(i)}$ for all $i=1,...,m$.
\end{defn}
 
The following inclusion relation is the main result in this section.

\begin{thm}\label{inclusion_r1}
Let $D,\hat{D}\in \Hn^m_n$ and $n>2$. If $\hat{D}\prec D$, then for any linear map $L:\Hn^m_n\to \mathbb{R}^3$, we have $W_L(\hat{D})\subset W_L(D)$.
\end{thm}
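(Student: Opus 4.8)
The plan is to reduce the inclusion $W_L(\hat D)\subset W_L(D)$ to a single pinching step, and then to analyze that single step with an explicit parametrized family of unitaries. Since $\prec$ is by definition generated by a finite product of pinching matrices, and since applying one pinching matrix to each $d^{(i)}$ produces a new tuple $D'$ with $\hat D\prec D'\prec D$, an easy induction on the number $k$ of pinching matrices shows it suffices to prove: if $\hat D$ is obtained from $D$ by a \emph{single} pinching matrix $P$ (acting on coordinates $s<t$ with parameter $\alpha$), then $W_L(\hat D)\subset W_L(D)$. By relabeling coordinates (which is a unitary conjugation, harmless by Lemma~\ref{basic}(ii)), I may assume $s=1$, $t=2$. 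So the heart of the matter is: for each $\alpha\in[0,1]$, every point of $W_L(\hat D)$ lies in $W_L(D)$, where $\hat d^{(i)}_1=\alpha d^{(i)}_1+(1-\alpha)d^{(i)}_2$, $\hat d^{(i)}_2=(1-\alpha)d^{(i)}_1+\alpha d^{(i)}_2$, and $\hat d^{(i)}_j=d^{(i)}_j$ for $j\geq 3$.

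Fix a point $L(V^*\hat D V)\in W_L(\hat D)$ for some unitary $V$; I must exhibit a unitary $W$ with $L(W^*DW)=L(V^*\hat D V)$. The key observation is that the pinched diagonal tuple $\hat D$ is itself, for a \emph{fixed} unitary acting only in the $2\times2$ block on coordinates $1,2$, a convex-type average; precisely, if $Q_\theta$ is the $n\times n$ unitary that is the rotation $\begin{pmatrix}\cos\theta&-\sin\theta\\\sin\theta&\cos\theta\end{pmatrix}$ in coordinates $1,2$ and the identity elsewhere, then $\mathrm{diag}(\hat d^{(i)})$ does \emph{not} equal $Q_\theta^*\,\mathrm{diag}(d^{(i)})\,Q_\theta$ for a single $\theta$ (that conjugation produces off-diagonal entries), so a direct substitution fails. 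Instead I would use the standard trick for pinchings: write $\alpha=\cos^2\theta$, and average the two rotations $Q_\theta$ and $Q_{\theta}$ composed with the coordinate swap, i.e. consider $W_1=V Q_\theta$ and $W_2 = V Q_\theta S$ where $S$ swaps coordinates $1,2$; one computes that the diagonal of $\tfrac12\big((VQ_\theta)^*D_i(VQ_\theta)+(VQ_\theta S)^*D_i(VQ_\theta S)\big)$ relates to $V^*\hat D_i V$. This shows $L(V^*\hat D_i V)$ is the \emph{midpoint} of two points $L(W_1^*DW_1)$ and $L(W_2^*DW_2)$ of $W_L(D)$. That is not yet enough, but it is the seed: it shows $W_L(D)$ contains a point and its ``reflection,'' and one wants the whole segment.

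The step I expect to be the main obstacle is upgrading this ``midpoint lies in $W_L(D)$'' observation to ``the exact target point $L(V^*\hat DV)$ lies in $W_L(D)$,'' and this is exactly where the hypotheses $\ell\le 3$ and $n>2$ enter. The right mechanism, I believe, is to build a continuous path of unitaries $\theta\mapsto U(\theta)$ (using the rotation $Q_\theta$ together with extra unitary freedom in a third coordinate, available because $n>2$) so that $\theta\mapsto L(U(\theta)^*DU(\theta))\in\R^3$ traces a closed loop in $W_L(D)$, passes through both $L(W_1^*DW_1)$ and $L(W_2^*DW_2)$, and is \emph{symmetric} under a reflection of $\R^3$ fixing the target point; then a connectedness/intermediate-value argument on this loop (dimension $1$ path in $\R^{\le 3}$) forces it to hit the target. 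Concretely, I would exploit that conjugation by $Q_\theta$ changes only the $1,2$-block, express $L$ restricted to that $2\times2$ block of freedom as an affine map of $(\cos 2\theta,\sin 2\theta)$ into $\R^3$ — giving an ellipse — and argue the chord between the $\theta$ and $\theta+\pi/2$ points of the ellipse, whose midpoint is the target-related point, combined with the extra $n>2$ coordinate to kill the one remaining degree of freedom, sweeps out the needed segment. Assembling these pieces — choosing $U(\theta)$, verifying the reflection symmetry, and invoking connectedness in $\R^3$ — is the technical core; everything else (the induction on $k$, the reduction to $s=1,t=2$, the bookkeeping of diagonal entries) is routine.
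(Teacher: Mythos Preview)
Your reduction to a single pinching acting on coordinates $1,2$ is correct and matches the paper. After that, however, the proposal has a genuine gap.

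First a minor point: the convex-combination identity is simpler than you make it. Since the swap $S$ of coordinates $1,2$ is unitary and $\hat D_i=\alpha D_i+(1-\alpha)S^*D_iS$, one gets directly
\[
L(V^*\hat D V)=\alpha\,L(V^*DV)+(1-\alpha)\,L\big((SV)^*D(SV)\big),
\]
a genuine $\alpha$-convex combination of two points of $W_L(D)$; the detour through $Q_\theta$ and $Q_\theta S$ (whose average has diagonal $\tfrac12(a+b),\tfrac12(a+b)$, not the pinched values) is both unnecessary and miscalculated.

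The serious problem is the endgame. Your plan is to build a \emph{one-parameter} loop $t\mapsto L(U(t)^*DU(t))$ in $\R^3$ and force it, by symmetry plus an intermediate-value argument, to pass through the prescribed target point. But a $1$-dimensional curve in $\R^3$ has no reason to meet a given point; reflection symmetry about that point does not help (a centrally symmetric loop in $\R^3$ generically misses its center). So the mechanism you sketch does not close.

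What the paper does instead is use a \emph{two}-parameter family $T_{\theta,\phi}$, a rotation in the $(1,2)$-block together with a relative phase $e^{i\phi}$, so that for each fixed $U$ the set $E_L(D,U)=\{L(U^*T_{\theta,\phi}^*DT_{\theta,\phi}U):\theta,\phi\}$ is an \emph{ellipsoid} (the affine image of a $2$-sphere) in $\R^3$, with the target in its convex hull. An ellipsoidal surface has codimension one, so ``inside'' and ``outside'' make sense. The hypothesis $n>2$ is then used not to add wiggle room in a third coordinate, but, via the Fan--Pall interlacing theorem, to produce a unitary $V$ for which the leading $2\times 2$ block of a certain Hermitian combination is scalar; this forces $E_L(D,V)$ to degenerate into a planar disk, where the solid ellipsoid coincides with its boundary. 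Connecting $U$ to $V$ by a continuous path $f$ in $\mathcal{U}_n$, one argues: either the target stays inside the moving solid ellipsoid all the way to $t=1$, where ``inside'' already means ``on'' $E_L(D,f(1))$; or it exits at some $t^*$, hence lies on $E_L(D,f(t^*))\subset W_L(D)$. The phase parameter $\phi$ (to get a codimension-one surface rather than a curve) and the interlacing step (to force degeneration) are precisely the two ingredients your outline lacks.
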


To prove Theorem \ref{inclusion_r1}, we need some lemmas. For $\theta,\phi\in\R$, let $T_{\theta,\phi}\in\mathcal{U}_n$ be defined by
\[T_{\theta,\phi}=\begin{pmatrix}\cos\theta & \sin\theta e^{\sqrt{-1}\phi} &0\\
-\sin\theta & \cos\theta e^{\sqrt{-1}\phi}& 0 \\ 0 & 0 & I_{n-2}\end{pmatrix}.\]

\begin{lem}\label{ellipse1}
	Let $D=(D_1,...,D_m)\in \Hn^m_n$ be an $m$-tuple of diagonal matrices. Then for any linear map $L:\Hn^m_n\to \mathbb{R}^3$ and $U\in\mathcal{U}_n$, the set of points
	\[
		E_L(D,U):=\{L(U^*T^*_{\theta,\phi}D_1T_{\theta,\phi}U,...,U^*T^*_{\theta,\phi}D_mT_{\theta,\phi}U):\theta\in[0,\pi],~\phi\in[0,2\pi]\}
	\]
	forms an ellipsoid in $\R^3$.
\end{lem}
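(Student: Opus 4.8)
The plan is to reduce the whole statement to an explicit $2\times2$ computation, since $T_{\theta,\phi}$ is the identity outside the leading $2\times2$ block, and then to pin all of the $\theta,\phi$-dependence onto three scalar parameters. Write $D_k=\dg(d^{(k)})$ and put $a_k=d^{(k)}_1$, $b_k=d^{(k)}_2$, $c_k=\tfrac{a_k+b_k}{2}$, $r_k=\tfrac{a_k-b_k}{2}$. First I would split $D_k=D_k^{0}+F_k$, where $D_k^{0}$ is $D_k$ with its first two diagonal entries replaced by $c_k$, and $F_k$ is the $n\times n$ matrix with entries $r_k,-r_k$ in positions $(1,1),(2,2)$ and zeros elsewhere. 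Since $T_{\theta,\phi}$ leaves $\mathrm{span}\{e_1,e_2\}$ invariant, is unitary there, and acts as the identity on $e_3,\dots,e_n$, one gets $T_{\theta,\phi}^{*}D_k^{0}T_{\theta,\phi}=D_k^{0}$ for all $\theta,\phi$.

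Next I would carry out the $2\times 2$ conjugation of $F_k$: a direct multiplication gives, for the leading block,
\[
T_{\theta,\phi}^{*}F_kT_{\theta,\phi}\big|_{2\times 2}=\begin{pmatrix} r_k\cos 2\theta & r_k\sin 2\theta\, e^{\sqrt{-1}\phi}\\[2pt] r_k\sin 2\theta\, e^{-\sqrt{-1}\phi} & -r_k\cos 2\theta\end{pmatrix},
\]
which rewrites as $T_{\theta,\phi}^{*}F_kT_{\theta,\phi}=r_k\bigl(\xi_1 G_1+\xi_2 G_2+\xi_3 G_3\bigr)$, where $(\xi_1,\xi_2,\xi_3)=(\cos 2\theta,\ \sin 2\theta\cos\phi,\ \sin 2\theta\sin\phi)$ and $G_1,G_2,G_3$ are the fixed Hermitian matrices with leading blocks $\dg(1,-1)$, $\left(\begin{smallmatrix}0&1\\1&0\end{smallmatrix}\right)$, $\left(\begin{smallmatrix}0&\sqrt{-1}\\-\sqrt{-1}&0\end{smallmatrix}\right)$ and zeros elsewhere. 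Hence $T_{\theta,\phi}^{*}D_kT_{\theta,\phi}=D_k^{0}+r_k\sum_{j=1}^{3}\xi_j G_j$, and conjugating by $U$ and using linearity of $L$,
\[
L\bigl(U^{*}T_{\theta,\phi}^{*}D_1T_{\theta,\phi}U,\dots,U^{*}T_{\theta,\phi}^{*}D_mT_{\theta,\phi}U\bigr)=v_0+\sum_{j=1}^{3}\xi_j\,w_j ,
\]
where $v_0:=L(U^{*}D_1^{0}U,\dots,U^{*}D_m^{0}U)$ and $w_j:=L(r_1U^{*}G_jU,\dots,r_mU^{*}G_jU)$ are vectors in $\R^3$ not depending on $\theta,\phi$.

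It then remains to determine the range of $(\xi_1,\xi_2,\xi_3)$. One checks $\xi_1^2+\xi_2^2+\xi_3^2=1$ always, and conversely that every point of the unit sphere is attained: given $(x,y,z)$ with $x^2+y^2+z^2=1$, pick $\theta\in[0,\tfrac{\pi}{2}]$ with $\cos 2\theta=x$, so $\sin 2\theta=\sqrt{y^2+z^2}\ge 0$, and then $\phi$ with $(\cos\phi,\sin\phi)=(y,z)/\sqrt{y^2+z^2}$ if $y^2+z^2>0$ (any $\phi$ otherwise). Thus, as $(\theta,\phi)$ ranges over $[0,\pi]\times[0,2\pi]$, the triple $(\xi_1,\xi_2,\xi_3)$ ranges over the full unit sphere $S^2\subset\R^3$, and therefore
\[
E_L(D,U)=\bigl\{\,v_0+\xi_1 w_1+\xi_2 w_2+\xi_3 w_3:\ (\xi_1,\xi_2,\xi_3)\in S^2\,\bigr\}
\]
is exactly the image of $S^2$ under the affine map $\xi\mapsto v_0+[\,w_1\ w_2\ w_3\,]\xi$ of $\R^3$, which is an ellipsoid (possibly degenerating to a flat ellipse, a segment, or a point if $w_1,w_2,w_3$ are linearly dependent).

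The computation itself is routine; the two points that need care are the bookkeeping that confines the entire $\theta,\phi$-dependence to the three scalars $\xi_j$ — so that $v_0$ and the $G_j$, hence the $w_j$, are genuinely constant — and the verification that the parametrization sweeps out the whole sphere rather than only part of it, which is what makes the image a full ellipsoid.
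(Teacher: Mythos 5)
Your proof is correct and is essentially the paper's argument in a different bookkeeping: the paper expresses $L$ via trace forms and computes each coordinate as $a+b\cos 2\theta+\re(ce^{\sqrt{-1}\phi})\sin 2\theta$, while you decompose $T_{\theta,\phi}^*D_kT_{\theta,\phi}$ in a Pauli-type basis, but both reduce $E_L(D,U)$ to the image of the unit sphere $(\cos 2\theta,\sin 2\theta\cos\phi,\sin 2\theta\sin\phi)$ under an affine map into $\R^3$. Your version has the small merit of making the surjectivity onto $S^2$ and the possible degeneracy of the ellipsoid explicit, which the paper leaves implicit.
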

\begin{proof}
Note that for any $L:\Hn^m_n\to \mathbb{R}^3$, we can always express $L$ as 
	\[
	L(X_1,...,X_m)=\left(\tr\left(\sum^m_{i=1}P_iX_i\right),\tr\left(\sum^m_{i=1}Q_iX_i\right),\tr\left(\sum^m_{i=1}R_iX_i\right)\right)
	\]
for some suitable $P_i,Q_i,R_i\in \Hn_n$, $i=1,...,m$. For $U\in\uni_n$, we write $UP_iU^*=(p^{(i)}_{jk})$, $UQ_iU^*=(q^{(i)}_{jk})$, $UR_iU^*=(r^{(i)}_{jk})$ and $D_i=\dg(d^{(i)}_1,...,d^{(i)}_n)$, $i=1,...,m$. By direct computations, the first coordinate of points in $E_L(D,U)$ is 
	\[
		\begin{aligned}
		&\;\tr\left(\sum^m_{i=1}P_iU^*T^*_{\theta,\phi}D_i T_{\theta,\phi}U\right)\\
		=& \;\tr\left(\sum^m_{i=1}D_iT_{\theta,\phi}UP_iU^*T^*_{\theta,\phi}\right)\\
		=&\; \frac{1}{2}\sum^m_{i=1}(d^{(i)}_1+d^{(i)}_2)(p^{(i)}_{11}+p^{(i)}_{22})+\sum^m_{i=1}\sum^n_{j=3} d^{(i)}_j p^{(i)}_{jj}\\ 
	  & \;\;\;\;\;\;\;+ \frac{1}{2}\sum^m_{i=1}(d_1^{(i)}-d_2^{(i)})(p_{11}^{(i)}-p_{22}^{(i)})\cos 2\theta\\
		& \;\;\;\;\;\;\;+\sum^m_{i=1}(d_1^{(i)}-d_2^{(i)})\re(p_{21}^{(i)}e^{\sqrt{-1}\phi})\sin 2\theta.
		\end{aligned}
	\]
Similarly for the second and the third coordinates of points in $E_L(D,U)$. Note that for $a_1,a_2,b_1,b_2,c_1,c_2\in\R$ and $a_3,b_3,c_3\in\C$, the points $(a_1,b_1,c_1)+(a_2,b_2,c_2)\cos 2\theta+\re(a_3e^{\sqrt{-1}\phi},b_3e^{\sqrt{-1}\phi},c_3e^{\sqrt{-1}\phi})\sin 2\theta$ form an ellipsoid in $\R^3$ when $\theta,\phi$ run through $[0,\pi]$ and $[0,2\pi]$ respectively. Hence $E_L(D,U)$ is an ellipsoid in $\R^3$.
\end{proof}

Note that $E_L(D,U)\subset W_L(D)$ for any $U\in\uni_n$.

\begin{lem}\label{ellipsoid_deg}
Let $D\in \Hn^m_n$ be an $m$-tuple of diagonal matrices with $n>2$. Then for any linear map $L:\Hn^m_n\to \mathbb{R}^3$, there exists $V\in\mathcal{U}_n$ such that $E_L(D,V)$ defined in Lemma \ref{ellipse1} degenerates (i.e., $E_L(D,V)$ is contained in a plane in $\R^3$).
\end{lem}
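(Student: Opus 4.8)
The plan is to flatten $E_L(D,V)$ by killing, simultaneously, the same coordinate of all of its semi-axis vectors. Recall from the computation in the proof of Lemma~\ref{ellipse1} that, writing $L(X_1,\dots,X_m)=\bigl(\tr(\sum_{i=1}^m P_iX_i),\,\tr(\sum_{i=1}^m Q_iX_i),\,\tr(\sum_{i=1}^m R_iX_i)\bigr)$ with $P_i,Q_i,R_i\in\Hn_n$, the set $E_L(D,U)$ consists of the points $\mathbf v_1+\mathbf v_2\cos 2\theta+\re(\mathbf v_3\,e^{\sqrt{-1}\phi})\sin 2\theta$ for certain $\mathbf v_1,\mathbf v_2\in\R^3$ and $\mathbf v_3\in\C^3$ depending on $D$, $L$ and $U$. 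Since $\re(\mathbf v_3\,e^{\sqrt{-1}\phi})=(\cos\phi)\,\re\mathbf v_3-(\sin\phi)\,\mathrm{Im}\,\mathbf v_3$, every point of $E_L(D,U)$ lies in the affine set $\mathbf v_1+\mathrm{span}_{\R}\{\mathbf v_2,\re\mathbf v_3,\mathrm{Im}\,\mathbf v_3\}$. Hence it suffices to produce $V\in\uni_n$ for which $\mathbf v_2,\re\mathbf v_3,\mathrm{Im}\,\mathbf v_3$ are linearly dependent in $\R^3$: then $E_L(D,V)$ sits inside an affine subspace of dimension at most $2$, i.e.\ inside a plane.

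Next I would read these vectors off from the displayed formula in Lemma~\ref{ellipse1}. Writing $D_i=\dg(d^{(i)}_1,\dots,d^{(i)}_n)$ and $f_j:=U^*e_j$, so that $f_1,\dots,f_n$ is the (arbitrary) orthonormal basis formed by the columns of $U^*$ and $(UP_iU^*)_{jk}=f_j^*P_if_k$, the first coordinates of $\mathbf v_2,\re\mathbf v_3,\mathrm{Im}\,\mathbf v_3$ are, respectively, $\tfrac12(f_1^*Sf_1-f_2^*Sf_2)$, $\re(f_2^*Sf_1)$ and $\mathrm{Im}(f_2^*Sf_1)$, where $S:=\sum_{i=1}^m(d^{(i)}_1-d^{(i)}_2)P_i\in\Hn_n$. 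Therefore, if $f_1,f_2\in\C^n$ are orthonormal with $f_1^*Sf_1=f_2^*Sf_2$ and $f_2^*Sf_1=0$, then the entire first row of the $3\times 3$ matrix with columns $\mathbf v_2,\re\mathbf v_3,\mathrm{Im}\,\mathbf v_3$ vanishes, so those three vectors are linearly dependent; completing $f_1,f_2$ to an orthonormal basis $f_1,\dots,f_n$ and taking $V\in\uni_n$ with $V^*=[\,f_1\ \cdots\ f_n\,]$ then yields a $V$ as required. In short, the problem reduces to finding a two-dimensional subspace of $\C^n$ on which the single Hermitian matrix $S$ compresses to a scalar multiple of the identity.

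This last step is where the hypothesis $n>2$ enters. Diagonalize $S=W\Lambda W^*$ with $\Lambda=\dg(\lambda_1,\dots,\lambda_n)$ and $\lambda_1\le\lambda_2\le\cdots\le\lambda_n$. Put $g_2:=e_2$ and $g_1:=(\cos t)\,e_1+(\sin t)\,e_3$, choosing $t\in[0,\tfrac{\pi}{2}]$ so that $\lambda_1\cos^2 t+\lambda_3\sin^2 t=\lambda_2$; such a $t$ exists because $\lambda_1\le\lambda_2\le\lambda_3$ (if $\lambda_1=\lambda_3$ then $\lambda_2=\lambda_1$ and any $t$ works, and otherwise $\sin^2 t=(\lambda_2-\lambda_1)/(\lambda_3-\lambda_1)\in[0,1]$). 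Then $g_1,g_2$ are orthonormal, $g_1^*\Lambda g_1=\lambda_2=g_2^*\Lambda g_2$, and $g_2^*\Lambda g_1=(\Lambda g_2)^*g_1=\lambda_2\,e_2^*g_1=0$; hence $f_1:=Wg_1$ and $f_2:=Wg_2$ satisfy $f_1^*Sf_1=f_2^*Sf_2$ and $f_2^*Sf_1=0$, which completes the construction.

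I expect the genuine obstacle to be conceptual rather than computational: recognizing that degeneracy of the ellipsoid only requires its three semi-axis vectors to be coplanar, and that this can be forced by annihilating a single coordinate — which converts a condition on the triple $(P_i),(Q_i),(R_i)$ into a condition on the one Hermitian matrix $S$, namely that $S$ admit a scalar $2\times 2$ compression. Once this reduction is seen, the existence of such a compression is an elementary eigenvector computation, and it is exactly there, in needing a third eigen-direction $e_3$, that $n>2$ is indispensable (for $n=2$ the only two-dimensional subspace is all of $\C^2$, on which $S$ is scalar only when it already is).
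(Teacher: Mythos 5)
Your proof is correct, and its core reduction is exactly the paper's: observe that it suffices to make the first coordinate of the points of $E_L(D,V)$ independent of $\theta$ and $\phi$, which happens precisely when the single Hermitian matrix $S=\sum_{i=1}^m (d^{(i)}_1-d^{(i)}_2)P_i$ (the paper's $P'$) has a scalar $2\times 2$ compression onto the span of the first two columns of $V^*$. The only place you diverge is in establishing that such a compression exists: the paper invokes the Fan--Pall generalized interlacing theorem \cite{Fan} to produce $V$ with $VP'V^*$ having $\alpha I_2$ as leading principal submatrix, whereas you construct the two orthonormal vectors explicitly from an eigenbasis of $S$, using $g_2=e_2$ and $g_1=(\cos t)e_1+(\sin t)e_3$ with $\lambda_1\cos^2 t+\lambda_3\sin^2 t=\lambda_2$. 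Your version is self-contained and makes visibly transparent where $n>2$ enters (the need for a third eigendirection), at the cost of a small computation; the paper's citation is shorter but hides that point inside the reference. Both are complete; your slightly more general framing (coplanarity of $\mathbf v_2,\re\mathbf v_3,\mathrm{Im}\,\mathbf v_3$ suffices) is sound, though in the end you, like the paper, achieve it by annihilating an entire coordinate.
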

\begin{proof}
Following the notations in Lemma \ref{ellipse1} and its proof, we let $\alpha_i=d^{(i)}_1-d^{(i)}_2$ for $i=1,...,m$ and $P'=\sum_{i=1}^m \alpha_i P_i\in \Hn_n$. Since $n>2$, by generalized interlacing inequalities for eigenvalues of Hermitian matrices (see \cite{Fan}), there exist $V\in\uni_n$ and $\alpha\in\R$ such that $VP'V^*$ has $\alpha I_2$ as leading $2\times 2$ principal submatrix. For any matrix $M$, let $M_{ij}$ denote its $(i,j)$ entry. Then by taking $U=V$ in the proof of Lemma \ref{ellipse1}, the first coordinate of points in $E_L(D,V)$ is $a+b\cos 2\theta +c\sin 2\theta$ where 
\[\begin{split}
a&=\frac{1}{2}\sum^m_{i=1}(d_1^{(i)}+d_2^{(i)})(p^{(i)}_{11}+p^{(i)}_{22})+\sum_{i=1}^m\sum_{j=3}^n d^{(i)}_jp_{ii}\\
b&=\frac{1}{2} \sum_{i=1}^m \alpha_i \left[ (VP_iV^*)_{11}-(VP_iV^*)_{22}\right]\\
&=\frac{1}{2} \left(V\left(\sum_{i=1}^m \alpha_i P_i\right)V^*\right)_{11}-\frac{1}{2}\left(V\left(\sum_{i=1}^m \alpha_i P_i\right)V^*\right)_{22}\\
&=\frac{1}{2}(VP'V^*)_{11}-\frac{1}{2}(VP'V^*)_{22}\\&=\frac{1}{2}\alpha -\frac{1}{2}\alpha =0,\\
c&=\sum^m_{i=1}\alpha_i\re\left((VP_iV^*)_{21}e^{\sqrt{-1}\phi}\right)\\
&=\re\left[\left(V\left(\sum^m_{i=1}\alpha_i P_i\right)V^*\right)_{21}e^{\sqrt{-1}\phi}\right]\\
&=\re((VP'V^*)_{21}e^{\sqrt{-1}\phi})=0.
\end{split}
\]

Since the first coordinate of points in $E_L(D,V)$ is constant for $\theta\in [0,\pi]$ and $\phi\in [0,2\pi]$, $E_L(D,V)$ degenerates.
\end{proof}

\begin{proof}[Proof of Theorem \ref{inclusion_r1}.]
Let $D=(D_1,...,D_m)=(\dg(d^{(1)}),...,\dg(d^{(m)}))$ and $\hat{D}=(\hat{D}_1,...,\hat{D}_m)=(\dg(\hat{d}^{(1)}), ...,$ $\dg(\hat{d}^{(m)}))$ where $d^{(1)},..,d^{(m)},\hat{d}^{(1)},...,\hat{d}^{(m)}\in\R^n$. We may further assume without loss of generality that $\hat{d}^{(i)}=Pd^{(i)}$ for all $i=1,...,m$ and $P=\begin{pmatrix}\alpha & 1-\alpha \\ 1-\alpha & \alpha\end{pmatrix}\oplus I_{n-2}$ with $0\leq \alpha\leq 1$. Then we have
\[
\hat{D}_i=\alpha T_{0,0}^*D_iT_{0,0}+(1-\alpha)T_{\frac{\pi}{2},0}^*D_iT_{\frac{\pi}{2},0}, \;\;\;i=1,...,m. 
\]
For any $U\in\uni_n$, we have $L(U^*\hat{D} U)\in\co(E_L(D,U))$ where $\co(\cdot)$ denotes the convex hull. By path-connectedness of $\mathcal{U}_n$, there exists a continuous function $f:[0,1]\to \mathcal{U}_n$
such that $f(0)=U$ and $f(1)=V$ where $V$ is defined in Lemma \ref{ellipsoid_deg} and hence $E(D,f(1))$ degenerates.
By continuity, there exists $t\in[0,1]$ such that $L(U^*\hat{D}U)\in E(D,f(t))\subset W_L(D)$.
\end{proof}

Using similar techniques, one can prove that Theorem \ref{inclusion_r1} stills holds for all linear maps $L:\Hn_n^m\to \R^2$ with $n\geq 2$. However, the following example shows that the inclusion relation in Theorem \ref{inclusion_r1} fails to hold if $L:\Hn^m_n\to\R^\ell$ is linear with $\ell>3$.

\begin{exam}
Let $n\geq 2$, $d=(1,...,0)^T$, $\hat{d}=(\frac{1}{2},\frac{1}{2},0,...,0)^T\in\R^n$ and let $O_k$ be the $k\times k$ zero matrix. Consider $D=(\dg(d),O_n,...,O_n)$, $\hat{D}=(\dg(\hat{d}),O_n,...,O_n)\in\Hn_n^m$ and $L:\Hn_n^m\to \R^\ell$ with $\ell\geq 4$ defined by
\[
L(X_1,...,X_m)=(\tr(PX_1),\tr(QX_1),\tr(RX_1),\tr(SX_1),0,...,0)
\]
where
\[
P= \begin{pmatrix}1& 0\\ 0&1\end{pmatrix}\oplus O_{n-2},\;\;\;\;Q= \begin{pmatrix}0& i\\ -i&0\end{pmatrix}\oplus O_{n-2},\]
\[
R= \begin{pmatrix}1& 0\\ 0&-1\end{pmatrix}\oplus O_{n-2},\;\;\;\;S= \begin{pmatrix}0& 1\\ 1&0\end{pmatrix}\oplus O_{n-2}.
\]
Then we have $\hat{D}\prec D$ and $(1,0,...,0)\in W_L(\hat{D})$, but $(1,0,...,0)\notin W_L(D)$.
\end{exam}

\section{Star-shapedness of the $L$-numerical range}

The $L$-numerical range may fail to be convex for linear maps $L:\Hn_n^m\to\R^\ell$ with $\ell\geq 2$ even when $A_1,...,A_m\in\Hn_n$ are simultaneously unitarily diagonalizable; see \cite{AYT}. However, we shall show in this section that for $n>2$, $W_L(A_1,...,A_m)$ is always star-shaped for all linear maps $L:\Hn_n^m\to\R^3$ and simultaneously unitarily diagonalizable $A_1,...,A_m\in\Hn_n$. The following result is the essential element in our proof. 

\begin{prop}\label{ss_pinching}\cite{Tsing}
Let $\mathbb{P}_n$ be the set of all finite products of $n\times n$ pinching matrices. Then for $0\leq\alpha\leq 1$, $\alpha I_n+ (1-\alpha) J_n$ is in the closure of $\mathbb{P}_n$ where $J_n$ is the $n\times n$ matrix with all entries equal $1/n$.
\end{prop}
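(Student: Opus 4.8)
The plan is to deduce the statement from the Lie--Trotter product formula. Put $Z:=I_n-J_n$. Since $J_n$ is the orthogonal projection of $\R^n$ onto the line spanned by $\mathbf 1:=(1,\dots,1)^T$, the matrix $Z$ is the orthogonal projection onto $\mathbf 1^\perp$; in particular $Z^2=Z$, so $\exp(cZ)=I_n+(e^c-1)Z$ for every $c\in\R$. Taking $c=\log\alpha$ for $\alpha\in(0,1]$ gives $\exp\bigl((\log\alpha)Z\bigr)=I_n+(\alpha-1)Z=\alpha I_n+(1-\alpha)J_n$. Hence it is enough to exhibit $\exp\bigl((\log\alpha)Z\bigr)$, for $\alpha\in(0,1]$, as a limit of finite products of pinching matrices.

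The next step is to write $Z$ as a sum of ``pinching generators''. For $1\le s<t\le n$ set $N_{st}:=(e_s-e_t)(e_s-e_t)^T$, where $e_1,\dots,e_n$ is the standard basis of $\R^n$. An entrywise computation gives $\sum_{1\le s<t\le n}N_{st}=nI_n-\mathbf 1\mathbf 1^T=nZ$ (this sum is the Laplacian of the complete graph $K_n$). From the definition of a pinching matrix, the one acting on coordinates $(s,t)$ with parameter $\beta$ equals $I_n-(1-\beta)N_{st}$; and since $N_{st}^2=2N_{st}$ one has $\exp(cN_{st})=I_n-\tfrac{1-e^{2c}}{2}N_{st}$, which for $c\le0$ is again a pinching matrix (its parameter $\tfrac{1+e^{2c}}{2}$ lies in $[\tfrac12,1]$). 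Now put $X_{st}:=\tfrac{\log\alpha}{n}N_{st}$, so that $\sum_{s<t}X_{st}=(\log\alpha)Z$ and, for $\alpha\in(0,1]$, every $e^{X_{st}/k}$ is a pinching matrix. The Lie--Trotter formula then gives
\[
\alpha I_n+(1-\alpha)J_n=\exp\Bigl(\sum_{s<t}X_{st}\Bigr)=\lim_{k\to\infty}\Bigl(\prod_{s<t}e^{X_{st}/k}\Bigr)^{k},
\]
and each matrix on the right-hand side is a finite product of pinching matrices, hence lies in $\mathbb{P}_n$. The case $\alpha=1$ is trivial ($I_n$ is itself a pinching matrix, with parameter $\alpha=1$), and $\alpha=0$ follows on letting $\alpha\to0^+$, since the closure of $\mathbb{P}_n$ is closed.

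One could also avoid quoting Lie--Trotter and argue directly: for small $\delta>0$ let $B_\delta:=\prod_{s<t}\bigl(I_n-\tfrac{\delta}{n}N_{st}\bigr)$, a finite product of pinching matrices, and expand to obtain $B_\delta=(1-\delta)I_n+\delta J_n+O(\delta^2)$; for $\alpha\in(0,1)$ set $\delta_k:=1-\alpha^{1/k}$ and note that $\bigl((1-\delta_k)I_n+\delta_kJ_n\bigr)^{k}=\alpha I_n+(1-\alpha)J_n$ (these matrices commute and compose like the corresponding scalars on $\mathbf 1^\perp$). Since pinching matrices, and hence their products, are doubly stochastic and so have spectral norm $1$, the telescoping bound $\bigl\|B_{\delta_k}^{\,k}-(\alpha I_n+(1-\alpha)J_n)\bigr\|\le k\bigl\|B_{\delta_k}-((1-\delta_k)I_n+\delta_kJ_n)\bigr\|=k\cdot O(\delta_k^2)$ tends to $0$, because $k\delta_k\to\log(1/\alpha)$ is bounded while $\delta_k\to0$. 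Beyond the routine identities $Z^2=Z$, $N_{st}^2=2N_{st}$ and $\sum_{s<t}N_{st}=nZ$, the only genuinely substantive point is this passage from $\alpha=0$ to arbitrary $\alpha\in(0,1)$: the case $\alpha=0$ (pure averaging) admits a soft argument --- the product of the $\tfrac12$-pinchings over all pairs, in any fixed order, is a strict contraction on $\mathbf 1^\perp$, so its powers converge to $J_n$ --- whereas for $\alpha\in(0,1)$ one never reaches $\alpha I_n+(1-\alpha)J_n$ in finitely many steps, and the real work lies in controlling the error produced when the near-identity product $B_\delta$ is raised to a high power (equivalently, in justifying the Trotter limit).
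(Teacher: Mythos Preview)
The paper does not prove this proposition at all: it is quoted verbatim from \cite{Tsing} and used as a black box, so there is no in-paper argument to compare against.

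Your argument is correct and self-contained. The identifications $Z^2=Z$, $N_{st}^2=2N_{st}$, $\sum_{s<t}N_{st}=nZ$, and the recognition of $\exp(cN_{st})$ (for $c\le0$) as a bona fide pinching matrix are all accurate, and the Lie--Trotter limit then delivers $\alpha I_n+(1-\alpha)J_n$ as a limit of elements of $\mathbb{P}_n$. Your alternative ``direct'' route is also sound: the key estimate $\|B_{\delta_k}^{\,k}-((1-\delta_k)I_n+\delta_kJ_n)^k\|\le k\,\|B_{\delta_k}-((1-\delta_k)I_n+\delta_kJ_n)\|$ follows because every finite product of pinching matrices is doubly stochastic (hence a convex combination of permutation matrices, so of operator norm at most $1$), and $k\delta_k^2\to0$ since $k\delta_k\to\log(1/\alpha)$ while $\delta_k\to0$. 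The side remark that the product of all $\tfrac12$-pinchings is a strict contraction on $\mathbf 1^\perp$ is justified by the fact that each $\tfrac12$-pinching is the orthogonal projection onto $\{x:x_s=x_t\}$, and a product of orthogonal projections has norm $<1$ off the intersection of their ranges, which here is $\mathrm{span}(\mathbf 1)$.

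In short: the paper offers no proof to benchmark against, and both of your routes are valid; the Lie--Trotter version is the cleaner of the two, while the telescoping version has the virtue of being entirely elementary.
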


Note that for any $A\in\Hn_n^m$, $\uni_n(A)$ is compact. Hence  $W_L(A)$ is compact for all linear maps $L$.

\begin{thm}\label{main_thm_R3}
Let $D=(D_1,...,D_m)\in \Hn^m_n$ be an $m$-tuple of diagonal matrices with $n>2$. Then for any linear map $L:\Hn^m_n\to \mathbb{R}^3$ , $W_L(D)$ is star-shaped with respect to star-center $L(\frac{\tr D_1}{n}I_n,...,\frac{\tr D_m}{n}I_n)$.
\end{thm}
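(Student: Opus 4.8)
The plan is to fix an arbitrary point $p\in W_L(D)$ and an arbitrary scalar $\alpha\in[0,1]$, and to show that $\alpha p+(1-\alpha)c$ lies in $W_L(D)$, where $c:=L\big(\tfrac{\tr D_1}{n}I_n,\ldots,\tfrac{\tr D_m}{n}I_n\big)$ is the proposed star-center. Write $D_i=\dg(d^{(i)})$ with $d^{(i)}\in\R^n$. The starting observation is that for the matrix $J_n$ of Proposition \ref{ss_pinching} one has $J_nd^{(i)}=\tfrac{\tr D_i}{n}\mathbf{1}$, where $\mathbf{1}=(1,\ldots,1)^T$, and therefore
\[
\big(\alpha I_n+(1-\alpha)J_n\big)d^{(i)}=\alpha d^{(i)}+(1-\alpha)\tfrac{\tr D_i}{n}\mathbf{1},\qquad i=1,\ldots,m.
\]
The $m$-tuple of diagonal matrices with these diagonals is precisely $\alpha D+(1-\alpha)B$ where $B:=\big(\tfrac{\tr D_1}{n}I_n,\ldots,\tfrac{\tr D_m}{n}I_n\big)$; since conjugation fixes each $\tfrac{\tr D_i}{n}I_n$, linearity of $L$ gives $W_L(\alpha D+(1-\alpha)B)=\alpha W_L(D)+(1-\alpha)c$, so $\alpha p+(1-\alpha)c$ is a natural candidate point to chase into $W_L(D)$.

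Next I would bring in the pinching machinery together with Theorem \ref{inclusion_r1}. If $Q=P_1P_2\cdots P_k$ is any finite product of $n\times n$ pinching matrices, then the $m$-tuple $\hat D$ of diagonal matrices with diagonals $Qd^{(1)},\ldots,Qd^{(m)}$ satisfies $\hat D\prec D$ by definition, hence (using $n>2$) Theorem \ref{inclusion_r1} yields $W_L(\hat D)\subset W_L(D)$. By Proposition \ref{ss_pinching} choose a sequence $Q_\nu\in\mathbb{P}_n$ with $Q_\nu\to\alpha I_n+(1-\alpha)J_n$, and let $\hat D^{(\nu)}$ be the $m$-tuple of diagonal matrices with diagonals $Q_\nu d^{(i)}$. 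Fixing a unitary $U$ with $p=L(U^*D_1U,\ldots,U^*D_mU)$, put
\[
p_\nu:=L\big(U^*\dg(Q_\nu d^{(1)})U,\ldots,U^*\dg(Q_\nu d^{(m)})U\big)\in W_L(\hat D^{(\nu)})\subset W_L(D).
\]
As $\nu\to\infty$ we have $\dg(Q_\nu d^{(i)})\to\alpha D_i+(1-\alpha)\tfrac{\tr D_i}{n}I_n$, so by continuity of $L$ and of conjugation, $p_\nu\to\alpha L(U^*D_1U,\ldots,U^*D_mU)+(1-\alpha)c=\alpha p+(1-\alpha)c$.

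Finally, since $\uni_n(D)$ is compact, $W_L(D)$ is compact and hence closed, so $\alpha p+(1-\alpha)c=\lim_\nu p_\nu\in W_L(D)$. As $p\in W_L(D)$ and $\alpha\in[0,1]$ were arbitrary (and taking $\alpha=0$ for any fixed $p$ also shows $c\in W_L(D)$), this proves $W_L(D)$ is star-shaped with respect to $c$. The one genuinely non-formal point is that Proposition \ref{ss_pinching} only places $\alpha I_n+(1-\alpha)J_n$ in the \emph{closure} of $\mathbb{P}_n$, not in $\mathbb{P}_n$ itself, so Theorem \ref{inclusion_r1} cannot be invoked directly at the limiting matrix; the compactness of $W_L(D)$ is exactly the ingredient that bridges this gap, and it is the step I would take care to make explicit.
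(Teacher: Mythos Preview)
Your proof is correct and follows essentially the same route as the paper: both arguments combine Proposition~\ref{ss_pinching}, Theorem~\ref{inclusion_r1}, and the compactness of $W_L(D)$ in exactly the way you describe. The only cosmetic difference is that the paper first normalizes to $\tr D_i=0$ via Lemma~\ref{basic} (so the star-center becomes the origin and the target inclusion reads $\alpha L(U^*DU)\in W_L(D)$), whereas you carry the center $c$ explicitly throughout; your version is in fact more detailed about the limiting step.
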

\begin{proof}
By Lemma \ref{basic}, we may assume without loss of generality that $\tr D_i=0$ for $i=1,...,m$; otherwise we replace $D_i$ by $D_i-\frac{\tr D_i}{n}I_n$. Let $D_i=\dg(d^{(i)})$ where $d^{(i)}\in\R^n$, $i=1,...,m$. For any $0\leq\alpha\leq 1$, we have $\alpha d^{(i)}= [\alpha I_n+ (1-\alpha) J_n ] d^{(i)} $. Then for any $U\in\uni_n$, by Proposition \ref{ss_pinching}, Theorem \ref{inclusion_r1} and the compactness of $W_L(D)$, we have $\alpha L(U^*DU)\in W_L(\alpha D)\subset \overline{W_L(D)}=W_L(D)$ where $\overline{M}$ denotes the closure of $M$.
\end{proof}

For a linear map $L:\Hn^m_n\to \mathbb{R}^2$, by regarding it as a projection of some linear map $\hat{L}:\Hn^m_n\to \mathbb{R}^3$, we deduce the following corollary easily.

\begin{coro}\label{main_thm_R2}
Let $D=(D_1,...,D_m)\in \Hn^m_n$ be an $m$-tuple of diagonal matrices with $n\geq 2$. Then for any linear map $L:H^m_n\to \mathbb{R}^2$ , $W_L(D)$ is star-shaped with respect to star-center $L(\frac{\tr D_1}{n}I_n,...,\frac{\tr D_m}{n}I_n)$. 
\end{coro}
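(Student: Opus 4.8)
The plan is to deduce the $\R^2$-valued statement from the $\R^3$-valued Theorem~\ref{main_thm_R3} by treating $L$ as two of the three coordinates of an $\R^3$-valued map, and to handle the remaining value $n=2$ (not covered by Theorem~\ref{main_thm_R3}) by a direct computation. For the main case $n>2$, first I would write $L=(L^{(1)},L^{(2)})$ with each $L^{(j)}:\Hn^m_n\to\R$ linear and set $\hat L:\Hn^m_n\to\R^3$, $\hat L(X)=(L^{(1)}(X),L^{(2)}(X),0)$, which is linear. Then $W_{\hat L}(A)=W_L(A)\times\{0\}$ for every $A$, and $\hat L(\tfrac{\tr D_1}{n}I_n,\dots,\tfrac{\tr D_m}{n}I_n)=(L(\tfrac{\tr D_1}{n}I_n,\dots,\tfrac{\tr D_m}{n}I_n),0)$. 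By Theorem~\ref{main_thm_R3}, $W_{\hat L}(D)$ is star-shaped about this point. Composing with the coordinate projection $\pi:\R^3\to\R^2$, $\pi(x,y,z)=(x,y)$, which is affine, sends $W_{\hat L}(D)$ onto $W_L(D)$ and carries the star-center to $L(\tfrac{\tr D_1}{n}I_n,\dots,\tfrac{\tr D_m}{n}I_n)$, and using the elementary fact that an affine image of a star-shaped set is star-shaped about the image of the old star-center (if $\alpha x+(1-\alpha)x_0\in M$ then $\alpha\pi(x)+(1-\alpha)\pi(x_0)=\pi(\alpha x+(1-\alpha)x_0)\in\pi(M)$), the case $n>2$ follows.

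For $n=2$ I would argue directly, since Theorem~\ref{main_thm_R3} requires $n>2$. By Lemma~\ref{basic} we may subtract scalars and assume $\tr D_i=0$, so each $D_i=\delta_i\,\dg(1,-1)$ for some $\delta_i\in\R$; then $U^*D_iU=\delta_i\,(U^*\dg(1,-1)\,U)$ with the \emph{same} $U$ in every slot. The orbit $\{U^*\dg(1,-1)\,U:U\in\uni_2\}$ is exactly the round $2$-sphere $S=\{H\in\Hn_2:\tr H=0,\ \tr(H^2)=2\}$ in the three-dimensional real space of traceless $2\times2$ Hermitian matrices (in the coordinates $H=\begin{pmatrix}a&b+ic\\ b-ic&-a\end{pmatrix}$ this is $a^2+b^2+c^2=1$). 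Hence $W_L(D)=\Lambda(S)$ for the linear map $\Lambda:\R^3\to\R^2$ sending $H$ to $L(\delta_1 H,\dots,\delta_m H)$; since $\Lambda$ has nontrivial kernel, each of its fibres is unbounded and meets $S$ as soon as it contains a point of the closed unit ball $\overline B$, so $\Lambda(S)=\Lambda(\overline B)$, a compact convex set (a filled ellipse, possibly degenerate) symmetric about $\Lambda(0)=0$. Translating back by $L(\tfrac{\tr D_1}{2}I_2,\dots,\tfrac{\tr D_m}{2}I_2)$ then shows $W_L(D)$ is a filled ellipse centered at that point, in particular star-shaped (indeed convex) about it. As alternatives one could, for $n=2$, note that $\alpha I_2+(1-\alpha)J_2$ is itself a pinching matrix and apply the $\R^2$-analogue of Theorem~\ref{inclusion_r1} to get $\alpha W_L(D)=W_L(\alpha D_1,\dots,\alpha D_m)\subset W_L(D)$; or simply rerun the proof of Theorem~\ref{main_thm_R3} verbatim with $\R^3$ replaced by $\R^2$, invoking that $\R^2$-analogue (valid, as remarked after that theorem, for all $n\ge2$), which covers every $n\ge2$ uniformly.

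Because every step is short, I do not expect a real obstacle here; the one point demanding care is precisely that Theorem~\ref{main_thm_R3} is stated only for $n>2$, so the corollary's range $n\ge2$ genuinely forces either the self-contained $n=2$ computation above or a verification that the Section~2 techniques deliver the $\R^2$-inclusion relation all the way down to $n=2$ — where the eigenvalue-interlacing step in the proof of Lemma~\ref{ellipsoid_deg} is unavailable and must be replaced, for instance by the observation that when $n=2$ the family $\{T_{\theta,\phi}\}$ already exhausts each conjugation orbit, so that $E_L(D,I_2)=W_L(D)$.
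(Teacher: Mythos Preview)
Your argument is correct, and for $n>2$ it is exactly the paper's approach: the paper states just before the corollary that one regards $L$ as a projection of some $\hat L:\Hn^m_n\to\R^3$ and then invokes Theorem~\ref{main_thm_R3}.

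For $n=2$ the paper takes a slightly different route. After reducing (as you do) to $m=1$ and $D=\dg(1,-1)$, it writes $L(X)=(\tr(PX),\tr(QX))$ and observes that
\[
W_L(D)=2W(P,Q)-(\tr P,\tr Q),
\]
which is convex by the Toeplitz--Hausdorff theorem and contains the origin. Your version instead identifies the unitary orbit of $\dg(1,-1)$ with the round $2$-sphere in the traceless Hermitian $2\times 2$ matrices and shows that any linear map $\Lambda:\R^3\to\R^2$ satisfies $\Lambda(S)=\Lambda(\overline B)$, a filled ellipse. Both yield convexity; the paper's argument is a one-line appeal to Toeplitz--Hausdorff, while yours is self-contained and in effect reproves the $2\times2$ case of that theorem. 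The alternative you mention at the end---using the $\R^2$-analogue of Theorem~\ref{inclusion_r1} for all $n\ge2$---is also acknowledged in the paper (in the remark following Theorem~\ref{inclusion_r1}) and would give a uniform proof, though the paper does not spell it out.
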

\begin{proof} We only need to consider the case $n=2$. We may assume without loss of generality that $m=1$ and $D=\dg(1,-1)$. For any linear map $L:\Hn_2\to\R^2$, we express it as $L(X):=(\tr(PX),\tr(QX))$ for some $P,Q\in\Hn_2$. Then we have
\[\begin{split}
W_L(D)&=\{2(x^*Px,x^*Qx)-(\tr P,\tr Q): x\in\C^n, x^*x=1\}\\
			&=2W(P,Q)-(\tr P,\tr Q),
\end{split}
\] which is convex and contains the origin. This implies that $W_L(D)$ is star-shaped with respect to star-center $L\left( \frac{ {\rm tr}\,D}n I_2 \right)$, which is the origin.
\end{proof}

Note that the star-shapedness of the $L$-numerical range for linear maps $L:\Hn_n^m\to\R^\ell$ with $\ell>3$ remains open in the diagonal case. Moreover, for general cases of $A=(A_1,...,A_m)$ where $A_1,...,A_m$ are not necessarily simultaneously unitarily diagonlizable and $L:\Hn_n^m\to\R^2$ with $m\geq 3$, the star-shapedness of $W_L(A)$ is also unclear. However, by applying a result in \cite{CT}, we can show that $L(\frac{\tr A_1}{n}I_n,...,\frac{\tr A_m}{n}I_n)\in W_L(A_1,...,A_m)$ for all linear maps $L:\Hn_n^m\to\R^2$.

\begin{prop}[\cite{CT}, P. 23.]\label{chan_inclusion}
Let $A_k=(a^{(k)}_{ij})\in \Hn_n$, $k=1,...,m$. For $0\leq \epsilon\leq 1$, define $A_k(\epsilon)$ as
\[
A_k(\epsilon)=
\begin{pmatrix}
a^{(k)}_{11} & \epsilon a^{(k)}_{12} & \cdots & \epsilon a^{(k)}_{1n} \\
\epsilon a^{(k)}_{21} &  a^{(k)}_{22} & \cdots & \epsilon a^{(k)}_{2n} \\
\vdots & \vdots & \ddots & \vdots \\
\epsilon a^{(k)}_{n1} & \epsilon a^{(k)}_{12} & \cdots &  a^{(k)}_{nn} 
\end{pmatrix},
 \;\;\;\; k=1,...,m.
\] Then $W_L(A_1(\epsilon),...,A_m(\epsilon))\subseteq W_L(A_1,...,A_m)$ for any linear map $L:\Hn_n^m\to\R^2$.
\end{prop}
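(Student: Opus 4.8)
The plan is to write each $A_k(\epsilon)$ as a single average of unitary conjugates of $A_k$, with the averaging family and weights common to all $k$, so that linearity of $L$ forces every point of $W_L(A_1(\epsilon),\ldots,A_m(\epsilon))$ into the convex hull of $W_L(A_1,\ldots,A_m)$; the real work is then to discard the convex hull, and this is where the hypothesis $L:\Hn_n^m\to\R^2$ must be used.

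For the first step, note that $A_k(\epsilon)=A_k\circ C_\epsilon$, the entrywise (Schur) product of $A_k$ with $C_\epsilon:=(1-\epsilon)I_n+\epsilon\,\mathbf 1\mathbf 1^{T}$, where $\mathbf 1=(1,\ldots,1)^{T}\in\R^n$. The matrix $C_\epsilon$ is a convex combination of rank-one matrices $\overline{v}\,v^{T}$ with all $|v_j|=1$; explicitly $C_\epsilon=\epsilon\,\mathbf 1\mathbf 1^{T}+(1-\epsilon)\,2^{-n}\sum_{v\in\{\pm1\}^n}v v^{T}$. Writing this combination as $\sum_{s=1}^{N}w_s\,\overline{v_s}\,v_s^{T}$ with $w_s\ge0$, $\sum_s w_s=1$, and putting $V_s:=\dg(v_s)\in\uni_n$, one has $A_k\circ(\overline{v_s}\,v_s^{T})=V_s^{*}A_kV_s$, hence $A_k(\epsilon)=\sum_{s}w_s V_s^{*}A_kV_s$ for every $k$ with the \emph{same} $V_s,w_s$. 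Therefore, for any $U\in\uni_n$, writing $U_s:=V_sU$ and using linearity of $L$,
\[
L\big(U^{*}A_1(\epsilon)U,\ldots,U^{*}A_m(\epsilon)U\big)=\sum_{s=1}^{N}w_s\,L\big(U_s^{*}A_1U_s,\ldots,U_s^{*}A_mU_s\big)\in\co\,W_L(A_1,\ldots,A_m),
\]
so $W_L(A_1(\epsilon),\ldots,A_m(\epsilon))\subseteq\co\,W_L(A_1,\ldots,A_m)$. At this point one may simplify what is left: from $A_k(\epsilon_1)(\epsilon_2)=A_k(\epsilon_1\epsilon_2)$ one reduces to $\epsilon$ arbitrarily close to $1$ (chaining the inclusion along $A,\ A(\epsilon^{1/k}),\ A(\epsilon^{2/k}),\ldots$), while joint continuity of $(U,\epsilon)\mapsto(U^{*}A_1(\epsilon)U,\ldots)$ and compactness of $\uni_n$ make $W_L(A_1(\epsilon),\ldots)$ vary continuously with $\epsilon$, so the value $\epsilon=0$ follows from $\epsilon>0$ since $W_L(A_1,\ldots,A_m)$ is closed.

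The main obstacle is the passage from $\co\,W_L(A_1,\ldots,A_m)$ back to $W_L(A_1,\ldots,A_m)$ itself: the $L$-numerical range is not convex in general, and indeed the Example in Section~2 shows the inclusion of this proposition is false for linear maps into $\R^\ell$ with $\ell\ge4$, so the target dimension $2$ is essential. I would handle it in the spirit of Section~2: starting from the convex-combination point $p=\sum_s w_s L(U_s^{*}A_1U_s,\ldots)$ with $U_s=V_sU$, deform the diagonal unitaries $V_s$ continuously (inside the path-connected torus of diagonal unitaries) toward a common value; this yields a continuous path in $\co\,W_L(A_1,\ldots)$ from $p$ to a bona fide element $L(U^{*}A_1U,\ldots)$ of $W_L(A_1,\ldots)$, and — using that in dimension~$2$ the relevant $\cos2\theta$, $\sin2\theta$ families trace out \emph{filled} elliptical regions lying inside $W_L(A_1,\ldots)$, exactly as in Lemmas~\ref{ellipse1}--\ref{ellipsoid_deg} — one should be able to arrange that $p$ lands on one of these regions, whence a continuity/degeneration argument like the one proving Theorem~\ref{inclusion_r1} gives $p\in W_L(A_1,\ldots,A_m)$. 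Making this last step precise is the crux; the details are those of \cite{CT}.
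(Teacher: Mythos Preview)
The paper does not prove this proposition at all: it is quoted from \cite{CT} (Chan's M.Phil.\ thesis) and used as a black box, so there is no in-paper argument to compare against.

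On your proposal itself: the first half is clean and correct. The identity $A_k(\epsilon)=A_k\circ C_\epsilon$ with $C_\epsilon=(1-\epsilon)I_n+\epsilon\,\mathbf 1\mathbf 1^{T}$, the decomposition $C_\epsilon=\epsilon\,\mathbf 1\mathbf 1^{T}+(1-\epsilon)2^{-n}\sum_{v\in\{\pm1\}^n}vv^{T}$, and the translation $A_k\circ(vv^{T})=V^{*}A_kV$ for $V=\dg(v)$ are all valid, and together they yield $W_L(A_1(\epsilon),\ldots,A_m(\epsilon))\subseteq\co W_L(A_1,\ldots,A_m)$ for \emph{any} linear $L$ into any $\R^\ell$. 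The semigroup reduction $A_k(\epsilon_1)(\epsilon_2)=A_k(\epsilon_1\epsilon_2)$ and the continuity/compactness remark are also sound.

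The genuine gap is exactly where you flag it: removing the convex hull. Your sketch here is not yet a proof. Deforming the $V_s$ toward a common diagonal unitary moves the \emph{convex combination} along a path in $\co W_L(A)$, but nothing in what you wrote forces that path to meet $W_L(A)$ at the starting point $p$; in particular, the appeal to ``filled elliptical regions'' and to Lemmas~\ref{ellipse1}--\ref{ellipsoid_deg} is only suggestive, since those lemmas concern the $T_{\theta,\phi}$-family acting on \emph{diagonal} tuples, whereas here the relevant motion is by diagonal unitaries acting on arbitrary $U^{*}A_kU$. A workable route (closer to what is done in \cite{CT}) is to reduce to a \emph{two-term} average $\alpha\,L(U^{*}AU)+(1-\alpha)\,L((VU)^{*}A(VU))$ with $V$ diagonal unitary, connect $I$ to $V$ by a one-parameter diagonal-unitary path $V(t)$, and show that in $\R^2$ the resulting loop $t\mapsto L((V(t)U)^{*}A(V(t)U))$ together with an ellipse-type filling traps the convex combination inside $W_L(A)$; this is where $\ell=2$ is genuinely used. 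As written, your proposal identifies the right obstacle and the right ingredients but stops short of assembling them, so it remains a plan rather than a proof.
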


\begin{thm}
Let $A=(A_1,...A_m)\in \Hn_n^m$ and $L:\Hn_n^m\to\R^2$ be linear. Then $L(\frac{\tr A_1}{n}I_n,...,\frac{\tr A_m}{n}I_n)\in W_L(A)$.
\end{thm}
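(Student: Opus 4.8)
The plan is to reduce the statement to the diagonal case already handled by Corollary~\ref{main_thm_R2}, using Proposition~\ref{chan_inclusion} to pass from $A$ to its ``diagonal part''. First I would dispose of the trivial case $n=1$: here every $U\in\uni_1$ has modulus $1$, so $U^*A_kU=A_k$ and $W_L(A)=\{L(A_1,\dots,A_m)\}=\{L(\tfrac{\tr A_1}{1}I_1,\dots,\tfrac{\tr A_m}{1}I_1)\}$, so there is nothing to prove. Henceforth assume $n\ge 2$.

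The main step is to apply Proposition~\ref{chan_inclusion} at the endpoint $\epsilon=0$. In the notation there, $A_k(0)=\dg(a^{(k)}_{11},\dots,a^{(k)}_{nn})$ is the diagonal matrix of diagonal entries of $A_k$; set $D_k:=A_k(0)$ and $D:=(D_1,\dots,D_m)\in\Hn_n^m$, which is an $m$-tuple of diagonal matrices. Proposition~\ref{chan_inclusion} then gives the inclusion $W_L(D)=W_L(A_1(0),\dots,A_m(0))\subseteq W_L(A_1,\dots,A_m)$. (Should one prefer not to invoke the proposition at $\epsilon=0$, the same inclusion follows by letting $\epsilon\to 0^+$: the matrices $A_k(\epsilon)$ converge entrywise to $D_k$, the set-valued map is continuous in the Hausdorff metric since $\uni_n$ is compact, and $W_L(A)$ is closed.)

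Finally, I would observe that $\tr D_k=\sum_{i=1}^n a^{(k)}_{ii}=\tr A_k$ for every $k$. Since $n\ge 2$, Corollary~\ref{main_thm_R2} applies to the diagonal tuple $D$ and shows $W_L(D)$ is star-shaped with respect to the star-center
\[
L\!\left(\tfrac{\tr D_1}{n}I_n,\dots,\tfrac{\tr D_m}{n}I_n\right)=L\!\left(\tfrac{\tr A_1}{n}I_n,\dots,\tfrac{\tr A_m}{n}I_n\right).
\]
By the very definition of star-shapedness, this star-center belongs to $W_L(D)$, hence to $W_L(A)$ by the inclusion above, which is precisely the assertion. I do not expect a genuine obstacle here; the only point that needs a moment's care is the legitimacy of taking $\epsilon=0$ in Proposition~\ref{chan_inclusion}, which is either immediate from its statement or justified by the short limiting argument indicated above.
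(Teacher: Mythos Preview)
Your argument is correct and follows essentially the same route as the paper: apply Proposition~\ref{chan_inclusion} at $\epsilon=0$ to obtain $W_L(A_1(0),\dots,A_m(0))\subseteq W_L(A)$, observe that $\tr A_k(0)=\tr A_k$, and then invoke Corollary~\ref{main_thm_R2} on the diagonal tuple to place the desired point in $W_L(A_1(0),\dots,A_m(0))$. Your added treatment of $n=1$ and the remark on $\epsilon=0$ are harmless embellishments; note that Proposition~\ref{chan_inclusion} is stated for $0\le\epsilon\le1$, so no limiting argument is needed.
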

\begin{proof}
Define $A_i(\epsilon)$ as in Proposition \ref{chan_inclusion} and note that $\tr A_i(\epsilon) = \tr A_i$ for $i=1,...,m$. Hence by Corollary \ref{main_thm_R2} and Proposition \ref{chan_inclusion}, we have
\[
L\left(\frac{\tr A_1}{n}I_n,...,\frac{\tr A_m}{n}I_n\right)\in W_L(A_1(0),...,A_m(0))\subseteq W_L(A_1,...A_m).
\]
\end{proof}

\end{document}